\documentclass{article}[12pt]

\usepackage[english]{babel}


\usepackage{a4wide}
\usepackage{amsmath}
\usepackage{graphicx}
\usepackage{amsthm}
\usepackage{xspace}
\usepackage{complexity}
\usepackage{amsthm,amssymb}
\usepackage{doi}
\usepackage[capitalise]{cleveref}
\usepackage{comment}

\newtheorem{theorem}{Theorem}
\newtheorem{lemma}[theorem]{Lemma}

\usepackage{authblk}

\author[1]{Carl Feghali}
\author[2]{Felicia Lucke}
\newcommand{\email}[1]{
  \texttt{#1}
}

\affil[1]{Univ Lyon, EnsL, CNRS, LIP, F-69342, Lyon Cedex 07, France\protect\\\email{carl.feghali@ens-lyon.fr}}
\affil[2]{Department of Informatics, University of Fribourg, Switzerland\protect\\\email{felicia.lucke@unifr.ch}}

\title{A (simple) proof of the rna conjecture on powers of cycles}

\makeatletter
\def\@maketitle{%
\newpage%
\null%
\begin{center}%
    \let\footnote\thanks %
    {\huge \@title %
      \par
    }
  \vskip 1.5em
    {\lineskip .5em
     \begin{tabular}[t]{c}
        \baselineskip=12pt
        \@author
     \end{tabular}
     \par
    }
\end{center}
\par
\vskip 1.5em}
\makeatother

\begin{document}

\maketitle

\begin{abstract}
We verify a recent conjecture of Sehrawat, Kumar and Ahlawat on the minimum bisection width (a notion that was rediscovered in 2020 by Acharya and
Kureethara under the name of rna number) of powers of cycles. 
\end{abstract}

\section{Introduction}

Let $G$ be a graph, and let $\sigma: E(G) \rightarrow \{-1, 1\}$. The pair $(G, \sigma)$ is called a \emph{parity signed graph with respect to some labeling of the vertices of $G$}, say as $v_1, \dots, v_n$, if for every distinct $i, j \in [n]$, $\sigma((v_i, v_j)) = 1$ if $i$ and $j$ have the same parity and $\sigma((v_i, v_j)) = -1$ otherwise. The \emph{rna number} of a graph $G$, introduced in 2020 by  Acharya and
Kureethara \cite{acharya} and denoted $\sigma^{-1}(G)$, is the minimum number of edges $e$ of $G$ such that $\sigma(e) = -1$ over all parity signed graphs $(G, \sigma)$. 

Perhaps most strikingly to readers familiar with graph optimization problems, the rna number corresponds to the \emph{minimum bisection width}, a fundamental notion dating back to the 1970s and having a wide range of applications; see \cite{cygan, DPS02, DDSS24} and references within. Of relevance to this note, the rna number is known for special graph families such as, for example, stars, wheels, paths, cycles and complete graphs \cite{acharya2}. 
For arbitrary graphs $G$, Kang et al.~\cite{kang}  showed $\sigma^{-1}(G) \leq \lfloor \frac{2|E(G)|+|V(G)|}{4} \rfloor$ and characterized when equality is achieved.

Let $d \geq 2$. The $d$th power of the cycle $C_n$ on $n$ vertices, denoted $C_n^d$, is obtained from $C_n$ by adding an edge $(u, v)$ in $C_n$ whenever the distance between $u$ and $v$ in $C_n$ is at most $d$. In a recent manuscript \cite{SKA23}, it was conjectured that $\sigma^{-1}(C_n^d) = d(d+1)$ for every $d \geq 2$ and $n \geq 2d+1$. In that same paper, the inequalities $2d \leq \sigma^{-1}(C_n^d) \leq d(d+1)$ were established and the conjecture was verified for $d \in \{2, 3\}$. In this brief note, we prove the conjecture. 

\begin{theorem}\label{thm}
Let $d \geq 2$ and $n \geq 2d+1$. Then $\sigma^{-1}(C_n^d) = d(d+1)$.
\end{theorem}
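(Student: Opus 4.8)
The plan is to translate the statement into minimum bisection and then into an edge‑isoperimetric inequality on $C_n^d$. As recalled in the excerpt, $\sigma^{-1}(G)$ equals the minimum, over all partitions $V(G)=A\cup B$ with $|A|=\lceil n/2\rceil$ and $|B|=\lfloor n/2\rfloor$, of the number of edges between $A$ and $B$. The upper bound $\sigma^{-1}(C_n^d)\le d(d+1)$ is already available from \cite{SKA23}, so only the lower bound needs proof. Fix such a partition, let $B$ be the smaller side, and put $s:=|B|=\lfloor n/2\rfloor$; since $n\ge 2d+1$ we have $d\le s\le n-d$. As $C_n^d$ is $2d$‑regular, counting degrees from $B$ gives that the number of crossing edges is
\[
 \mathrm{cut}(A,B)=2d\,s-2\,e(B),
\]
where $e(B)$ is the number of edges induced by $B$. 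Hence $\sigma^{-1}(C_n^d)\ge d(d+1)$ is \emph{equivalent} to the edge‑isoperimetric bound $e(B)\le d\,s-\binom{d+1}{2}$ for every $B$ with $d\le|B|\le n-d$.

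First I would record that this bound is tight and is attained when $B$ is an \emph{arc} (a set of $s$ consecutive vertices): for $d\le s\le n-d$ the complementary arc has length $\ge d$, so there are no wrap‑around edges and the arc induces exactly
\[
 \sum_{k=1}^{d}(s-k)=d\,s-\binom{d+1}{2}
\]
edges, giving $\mathrm{cut}=d(d+1)$ and recovering the known upper bound. Thus arcs are the conjectured optimal bisections, and the entire content of the theorem is the isoperimetric statement that, for the admissible sizes, arcs maximize the number of induced edges in $C_n^d$. It is convenient to also phrase the target as $\sum_{k=1}^{d}h_k\ge\binom{d+1}{2}$, where $h_k=|\{i:i\in B,\ i+k\in A\}|$, since $\mathrm{cut}(A,B)=2\sum_{k=1}^{d}h_k$.

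To prove the isoperimetric inequality I would use a compression (shifting) argument, organised by the following dichotomy. If some maximal run of $A$ has length at least $d$, then cutting the cycle inside that run destroys no edge of $B$, so $e(B)$ equals the number of edges of $B$ inside the corresponding power of a \emph{path} $P_n^d$; there the one‑dimensional left‑compression is clean and monotone, and it pushes $B$ to an initial interval, yielding the bound. If instead every maximal run of $A$ has length at most $d-1$, then $B$ breaks into many runs and the cut is forced to be large. In the general picture one repeatedly applies a local move that merges two maximal runs of $B$ into fewer runs without decreasing $e(B)$, terminating at an arc.

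The hard part is exactly the monotonicity of the compression in the \emph{many short runs} regime. The inequality is genuinely global rather than termwise: one cannot hope for $h_k\ge k$ individually (this already fails for a balanced alternating set, where $h_1$ is large but $h_k=0$ for $k\ge 2$), only for the sum $\sum_k h_k$. Consequently the local move must be chosen so that the edges \emph{gained} by bringing two runs together are balanced against the edges \emph{lost} by the run pushed away, and showing this trade‑off is never negative — equivalently, settling the short‑run case of the dichotomy for all admissible $n$, including the smallest values just above $2d+1$ — is the step requiring genuine care. Once that monotonicity is secured, the reduction to arcs and the bookkeeping with the formula $\mathrm{cut}(A,B)=2d\,s-2e(B)$ complete the proof.
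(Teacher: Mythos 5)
Your reduction is correct as far as it goes: the identity $\mathrm{cut}(A,B)=2ds-2e(B)$ for the $2d$-regular graph $C_n^d$, the resulting equivalence with the isoperimetric bound $e(B)\le ds-\binom{d+1}{2}$ for $d\le s\le n-d$, and the computation showing arcs attain it are all accurate (and the arc construction independently recovers the upper bound). But what you have written is a restatement of the problem plus a strategy, not a proof. The entire content of the lower bound now sits in the claim that your local merging move never decreases $e(B)$ (equivalently, that arcs maximize the number of induced edges), and you explicitly flag this as ``the step requiring genuine care'' without carrying it out. The difficulty is real: as you yourself observe, the inequality is global rather than termwise ($h_k\ge k$ fails), so the compression must be engineered so that the edges gained by merging two runs of $B$ dominate the edges lost at the run that is displaced, and this must be verified in the many-short-runs regime and for the smallest admissible $n$. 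Even the path-power case you invoke (``the one-dimensional left-compression is clean and monotone'' for $P_n^d$) is asserted rather than proved. Until the merging lemma is established, the argument has a genuine gap precisely at the theorem's only nontrivial point.

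For comparison, the paper sidesteps isoperimetry entirely with a minimal-counterexample induction on $n$. The base case $n=2d+1$ is the complete graph $K_{2d+1}$, where every balanced coloring cuts exactly $(d+1)d$ edges. For $n>2d+1$, delete a vertex $v_0$ from the larger color class and add the $d$ edges $(v_i,v_{i+d+1})$, $i\in\{n-d,\dots,n-1\}$, to obtain $C_{n-1}^d$ with an induced balanced coloring; these $d$ new edges are pairwise vertex-disjoint and $v_0$ is adjacent to both endpoints of each, so every new bichromatic edge can be injectively charged to a bichromatic edge at $v_0$ that was removed. Hence the cut does not increase, contradicting minimality. If you want to pursue your route, you must actually prove the compression monotonicity; otherwise the inductive vertex-deletion argument is both shorter and complete.
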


\section{Proof}

Let $G$ be a graph. A \emph{balanced coloring} of $G$ is a function $f: V(G) \rightarrow \{1,2\}$ such that $|f^{-1}(2)| - 1 \leq |f^{-1}(1)| \leq |f^{-1}(2)| + 1$ (and hence $|f^{-1}(1)| - 1 \leq |f^{-1}(2)| \leq |f^{-1}(1)| + 1$). Given a balanced coloring $f$ of $G$, let $g(f) = \{(x,y) \in E(G): f(x) \not= f(y)\}$. Notice then that 
$$
\sigma^{-1}(G):=\min_f |g(f)|
$$
over all balanced colorings $f$ of $G$. 

The following two lemmas combined imply Theorem \ref{thm}. 

\begin{lemma}
Let $d \geq 2$ and $n \geq 2d+1$. Then $\sigma^{-1}(C_n^d) \leq d(d+1)$.
\end{lemma}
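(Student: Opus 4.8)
The plan is to exhibit a single balanced coloring of $C_n^d$ whose cut-set $g(f)$ has size at most $d(d+1)$; by the reformulation $\sigma^{-1}(G) = \min_f |g(f)|$ stated above, this immediately yields the bound. Write the vertices of $C_n^d$ as $v_0, v_1, \dots, v_{n-1}$ in cyclic order and define $f$ by coloring a contiguous arc $A = \{v_0, \dots, v_{p-1}\}$ with $1$ and its complement $B = \{v_p, \dots, v_{n-1}\}$ with $2$, where $p = \lceil n/2 \rceil$. Then $|A| = \lceil n/2 \rceil$ and $|B| = \lfloor n/2 \rfloor$ differ by at most $1$, so $f$ is a balanced coloring.

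First I would record the structural fact that makes this tractable: since $n \geq 2d+1$, every edge of $C_n^d$ joins two vertices at cyclic distance at most $d < n/2$, so each edge has a \emph{unique} shorter arc joining its endpoints. Consequently a cut edge (one endpoint in $A$, one in $B$) has its shorter arc crossing exactly one of the two \emph{gaps} of the partition: the gap between $v_{p-1}$ and $v_p$ (call it the right gap) or the gap between $v_{n-1}$ and $v_0$ (the left gap). Thus every cut edge is charged to exactly one gap.

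Next I would count, for a fixed gap, the edges whose shorter arc crosses it. Near the right gap such an edge joins $v_{p-1-a} \in A$ to $v_{p+b} \in B$ for integers $a, b \geq 0$, and it is present in $C_n^d$ precisely when the forward arc-length $a+b+1$ is at most $d$, i.e. $a+b \leq d-1$. The number of such pairs is $\sum_{s=0}^{d-1}(s+1) = \binom{d+1}{2} = \tfrac{d(d+1)}{2}$. Here I would check that $n \geq 2d+1$ guarantees both that these pairs are legitimate (each arc has length at least $d$, since $\lfloor n/2 \rfloor \geq d$, so the indices stay inside $A$ and $B$) and that $a+b+1 \leq d < n-(a+b+1)$, confirming that the forward arc is genuinely the shorter one. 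An identical count applies to the left gap.

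Summing the two contributions gives $|g(f)| \leq 2 \cdot \tfrac{d(d+1)}{2} = d(d+1)$, which is the lemma. The main point to get right, and the only place the hypothesis $n \geq 2d+1$ is really used, is the bookkeeping at the two gaps: I must ensure that no edge wraps around so as to be charged to both gaps and that no cut edge escapes being counted. Both follow from the uniqueness of the shorter arc when $d < n/2$; the per-gap arithmetic is routine once this is pinned down.
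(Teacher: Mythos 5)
Your proof is correct and takes essentially the same approach as the paper: both exhibit the balanced coloring given by a contiguous arc of roughly half the vertices and count the cut edges charged to the two boundary gaps. Your explicit per-gap count of $\binom{d+1}{2}$ is in fact a welcome elaboration, since the paper merely asserts the total as $2\binom{d}{2}$ (an apparent typo for $2\binom{d+1}{2} = d(d+1)$) without the bookkeeping you supply.
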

\begin{proof}
Let $G$ be the graph with vertex set \[V(G) = \left\{v_0, \dots, v_{n-1}\right\}\] and edge set
\[E(G) = \left\{ (v_i,v_{i+j}): i \in \left\{0 , \dots,  n-1\right\}, j \in \left\{1 , \dots,  d \right\} \right\},\]
where addition is modulo $n$. Thus, $G:= C_n^d$.

Now, let $A = \{v_i: 0 \leq i < \lfloor \frac{n-1}{2} \rfloor\}$, $B = V(G) \setminus A$ and $f: V(G) \rightarrow \{1,2\}$ be the coloring such that $f(v) = 1$ if $v \in A$ and $f(v) = 2$ if $v \in B$. Then  $|f^{-1}(2)| - 1 \leq |f^{-1}(1)| \leq |f^{-1}(2)|$ so that $f$ is balanced and $|g(f)| = 2\binom{d}{2} = d(d+1)$; thus  $\sigma^{-1}(C_n^d) \leq d(d+1)$ as needed. 
\end{proof}

\begin{lemma}
Let $d \geq 2$ and $n \geq 2d+1$. Then $\sigma^{-1}(C_n^d) \geq d(d+1)$.
\end{lemma}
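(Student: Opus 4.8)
The plan is to show that the minimum in $\sigma^{-1}(C_n^d)=\min_f|g(f)|$ is attained by an \emph{interval} colouring, i.e.\ one whose two colour classes are complementary arcs, whose cut value is exactly $d(d+1)$ by the computation in the previous lemma. Throughout I would encode a balanced colouring $f$ by a sign vector $\eta\colon \mathbb{Z}_n\to\{+1,-1\}$ (with $\eta_i=+1$ iff $f(v_i)=1$) and write $N=|g(f)|$ for its number of bichromatic edges. Balancedness forces each class to have size at least $\lfloor n/2\rfloor\ge d$, so both arcs of an interval colouring have length at least $d$ and such a colouring really does realise $N=d(d+1)$. Since there are finitely many balanced colourings, it suffices to prove that \emph{some minimiser of $N$ is an interval colouring}.

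The engine is an exact formula for how $N$ changes under an adjacent transposition. Suppose $\eta_x=-1$ and $\eta_{x+1}=+1$, and recolour so that $\eta_x=+1$, $\eta_{x+1}=-1$; this fixes both class sizes and hence preserves balancedness. The key observation is that almost everything cancels: the edge $v_xv_{x+1}$ stays bichromatic, and every vertex $v_z$ with $x-d<z<x+d+1$ and $z\notin\{x,x+1\}$ is a common neighbour of both $v_x$ and $v_{x+1}$, so (as $\{\eta_x,\eta_{x+1}\}=\{+1,-1\}$ before and after) it is incident to exactly one bichromatic edge among $v_zv_x,v_zv_{x+1}$ in both colourings and contributes nothing. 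The only \emph{unshared} neighbours are the two fringe vertices $v_{x-d}$ (adjacent to $v_x$ only) and $v_{x+d+1}$ (adjacent to $v_{x+1}$ only), and a one-line check on these two edges yields the clean identity $\Delta N=\eta_{x+d+1}-\eta_{x-d}\in\{-2,0,2\}$.

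With this formula in hand I would argue by rearrangement. Among all minimisers choose one with the fewest monochromatic runs, and suppose for contradiction it has at least four runs (so it is not an interval colouring). The goal is to find a short run and a transposition of the above type at one of its boundaries that both \emph{merges runs} and has $\Delta N\le 0$; since we are at a minimiser this forces $\Delta N=0$, giving another minimiser with strictly fewer runs, a contradiction. Iterating leaves an interval colouring of minimum value, proving $\sigma^{-1}(C_n^d)\ge d(d+1)$. The hard part is exactly this last step: converting the local-optimality inequalities $\eta_{x+d+1}\ge\eta_{x-d}$ (one per boundary) into the global conclusion that a minimiser is a single interval, i.e.\ guaranteeing that a run-reducing transposition with $\Delta N\le 0$ always exists when more than two runs are present; this needs a careful analysis of the colours at distance $d$ from each boundary. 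Here the hypothesis $n\ge 2d+1$ is essential: when $n=2d+1$ the graph $C_n^d$ is the complete graph $K_{2d+1}$, every balanced colouring trivially has $N=d(d+1)$, and the two fringe vertices collapse to one (consistent with $\Delta N\equiv 0$), while for $n\ge 2d+2$ the fringe vertices are distinct and unshared, so the transposition formula and the whole argument go through.
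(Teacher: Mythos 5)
Your exchange framework is sound as far as it goes --- the identity $\Delta N=\eta_{x+d+1}-\eta_{x-d}$ for an adjacent transposition is correct for $n\ge 2d+2$, and the reduction of the lemma to ``some minimiser is an interval colouring'' is legitimate --- but the proof has a genuine gap, and you name it yourself: the passage from the local optimality conditions $\eta_{x+d+1}\ge\eta_{x-d}$ (one per boundary) to the conclusion that a minimiser has only two runs is exactly the content of the lemma, and it is left entirely unproved. Worse, the mechanism you propose for closing it does not exist as described: a single adjacent transposition at the boundary of a run never merges two runs. If the run has length $\ge 2$ it \emph{splits off} a singleton (increasing the run count), and if it is a singleton it merely translates it by one position; merging only occurs after a singleton has been walked, step by step, all the way to another run of its colour, and each intermediate step must separately be shown to have $\Delta N=0$. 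So ``choose a minimiser with fewest runs and apply one run-merging transposition with $\Delta N\le 0$'' is not a valid induction step --- you would need either a sequence of zero-cost moves (and an argument that the required signs at distance $d$ from each successive boundary cooperate along the whole walk) or a different potential function. None of that is supplied, and it is not routine.

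For comparison, the paper proves the lower bound by a much shorter induction on $n$: take a minimal counterexample, note that $n=2d+1$ gives a complete graph where every balanced colouring has exactly $d(d+1)$ bichromatic edges, and otherwise delete a vertex $v_0$ of the (weakly) larger colour class and add the $d$ edges $(v_i,v_{i+d+1})$ that ``pass through'' $v_0$, obtaining $C_{n-1}^d$ with the induced (still balanced) colouring. Each new edge that is bichromatic has an endpoint coloured differently from $v_0$, and the $d$ new edges have pairwise disjoint endpoint sets, so the new bichromatic edges are injectively charged to deleted bichromatic edges incident with $v_0$; hence the cut does not increase, contradicting minimality. That argument avoids any global structure theorem for minimisers, which is precisely the part your proposal is missing. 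If you want to salvage your route, the statement you must actually prove is: every balanced colouring with at least four runs admits a balanced colouring with fewer runs and no larger cut --- and that requires the careful case analysis you deferred.
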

\begin{proof}
Let $G$ be the graph with vertex set \[V(G) = \left\{v_0, \dots, v_{n-1}\right\}\] and edge set
\[E(G) = \left\{ (v_i,v_{i+j}): i \in \left\{0 , \dots,  n-1\right\}, j \in \left\{1 , \dots,  d \right\} \right\},\]
where here and forth all addition is modulo $n$. Thus, $G:= C_n^d$. For a contradiction,
assume that $G$ is a counterexample to the lemma, with $n$ as small as possible.
Thus, $G$ has a balanced coloring~$f$ such that \begin{equation}\label{eq:1}
 |g(f)| < d(d+1).   
\end{equation} 

Firstly, $n > 2d+1$, since otherwise $G$ is complete and so $|g(h)| = d(d+1)$ for every balanced coloring $h$ of $G$, which is a contradiction. Assume WLOG that $|f^{-1}(1)| \geq |f^{-1}(2)|$ and $f(v_0) = 1$. Let $S = \{(v_i, v_{i+d+1}): i \in \left\{n-d , \dots,  n-1\right\}\}$, and let $H = G - v_0 + S$. Then $H := C_{n-1}^d$. Let $f'$ be the coloring of $H$ given by $f'(v) = f(v)$ for each $v \in V(H)$. 

Now, by the choice of $v_0$, $f'$ is a balanced coloring of $H$; moreover, for every $(u, v) \in S$ such that $f'(u) \not= f'(v)$, vacuously either $f(v_0) \not= f(u)$ or $f(v_0) \not= f(v)$. Let $T$ denote the set of edges incident with $v_0$ in $G$. Therefore, as $v_0$ is adjacent to both $u$ and $v$, we have 
\begin{equation} \label{eq:2}
\begin{split}
|g(f')| & = |\{(u, v) \in S: f'(u) \not= f'(v)\}| + |\{(u, v) \in E(H) \setminus S: f'(u) \not= f'(v)\}|  \\
 & =  |\{(u, v) \in S: f'(u) \not= f'(v)\}| + |\{(u, v) \in E(G) \setminus T: f(u) \not= f(v)\}| \\
 & \leq |\{(v_0, u) \in T: f(v_0) \not= f(u)\}| + |\{(u, v) \in E(G) \setminus T: f(u) \not= f(v)\}| \\ & = |g(f)|,
\end{split}
\end{equation}
and so, by (\ref{eq:1}) and (\ref{eq:2}), $\sigma^{-1}(H) < d(d+1)$, and this contradiction to the minimality of $G$ completes the proof.   
\end{proof}

\bibliographystyle{alpha}
\bibliography{ref}

\end{document}